\DeclareMathOperator{\RR}{\mathbb{R}}
\DeclareMathOperator{\ZZ}{\mathbb{Z}}
\DeclareMathOperator{\BC}{\mathcal{B}}
\DeclareMathOperator{\TC}{\mathcal{T}}
\DeclareMathOperator{\PC}{\mathcal{P}}
\DeclareMathOperator{\DC}{\mathcal{D}}
\DeclareMathOperator{\SC}{\mathcal{S}}
\DeclareMathOperator{\UC}{\mathcal{U}}
\DeclareMathOperator{\JC}{\mathcal{J}}
\DeclareMathOperator{\IC}{\mathcal{I}}
\DeclareMathOperator{\BS}{\mathscr{B}}
\DeclareMathOperator{\DS}{\mathscr{D}}
\DeclareMathOperator{\RS}{\mathscr{R}}
\DeclareMathOperator{\LS}{\mathscr{L}}
\DeclareMathOperator{\SScr}{\mathscr{S}}
\DeclareMathOperator{\rank}{rank}
\DeclareMathOperator{\width}{width}
\DeclareMathOperator{\vertex}{vert}
\DeclareMathOperator{\lcm}{lcm}
\DeclareMathOperator{\poly}{poly}
\DeclareMathOperator{\paral}{Par}
\DeclareMathOperator{\BZero}{\mathbf 0}
\newcommand*{\intint}[2][1]{\{#1, \dots, #2\}}
\begin{document}

\title{Enumeration and Unimodular Equivalence of Empty Delta-Modular Simplices \thanks{The article was prepared under financial support of Russian Science Foundation grant No 21-11-00194.}
}

%
%
\author{D.~V.~Gribanov\inst{1,2}\orcidID{0000-0002-4005-9483}}
%
%
\institute{
Lobachevsky State University of Nizhny Novgorod, 23 Gagarina Avenue, Nizhny Novgorod, 603950, Russian Federation \and
National Research University Higher School of Economics, 25/12 Bolshaja Pecherskaja Ulitsa, Nizhny Novgorod, 603155, Russian Federation \email{dimitry.gribanov@gmail.com}
}
\maketitle              
\begin{abstract}

Consider a class of simplices defined by systems $A x \leq b$ of linear inequalities with \emph{$\Delta$-modular} matrices. A matrix is called \emph{$\Delta$-modular}, if all its rank-order sub-determinants are bounded by $\Delta$ in an absolute value. In our work we call a simplex \emph{$\Delta$-modular}, if it can be defined by a system $A x \leq b$ with a $\Delta$-modular matrix $A$. And we call a simplex \emph{empty}, if it contains no points with integer coordinates. In literature, a simplex is called \emph{lattice-simplex}, if all its vertices have integer coordinates. And a lattice-simplex called \emph{empty}, if it contains no points with integer coordinates excluding its vertices. 

Recently, assuming that $\Delta$ is fixed, it was shown in \cite{OnCanonicalProblems_Grib} that the number of $\Delta$-modular empty simplices modulo the unimodular equivalence relation is bounded by a polynomial on dimension. We show that the analogous fact holds for the class of $\Delta$-modular empty lattice-simplices. As the main result, assuming again that the value of the parameter $\Delta$ is fixed, we show that all unimodular equivalence classes of simplices of the both types can be enumerated by a polynomial-time algorithm. As the secondary result, we show the existence of a polynomial-time algorithm for the problem to check the unimodular equivalence relation for a given pair of $\Delta$-modular, not necessarily empty, simplices.

\keywords{Lattice Simplex \and Empty Simplex \and Delta-Modular Matrix \and Bounded Sub-determinants \and Unimodular Equivalence \and Enumeration Algorithm}
\end{abstract}

\section{Introduction}

On the one hand, simplices are quite simple objects, since they form a class of solid polytopes with the minimum possible number of vertices. At the same time, the simplices are substantial and fundamental geometric and combinatorial objects. For example, the unbounded knapsack problem can be modeled as the integer linear optimization problem on a simplex; Simplices are fundamental building blocks of any triangulation of a given polytope; Simplices can be used as universal and quite simple bounding regions; And etc.

In our work we consider only the simplices $\SC$ defined by $\SC = \{ x \in \RR^n \colon A x \leq b \}$, where $A \in \ZZ^{(n+1)\times n}$, $\rank(A) = n$ and $b \in \ZZ^{n+1}$. Our main interest consists of various problems associated with the set $\SC \cap \ZZ^n$. For example, in the \emph{integer feasibility problem} we need to decide, ether $\SC \cap \ZZ^n \not= \emptyset$ or not. This problem is naturally $N\!P$-complete, because the classical $N\!P$-complete unbounded subset-sum problem can be represented as an integer feasibility problem in a simplex. In the more general $N\!P$-hard \emph{integer linear optimization problem} we need to find a point $x^* \in \SC \cap \ZZ^n$ such that $c^\top x^* = \max\{c^\top x \colon x \in \SC \cap \ZZ^n\}$, where $c \in \ZZ^n$ is an arbitrary vector, or decide that $\SC \cap \ZZ^n = \emptyset$.  However, assuming that dimension is fixed, due to seminal work of Lenstra \cite{Lenstra} (for more up-to-date and universal algorithms see, for example, \cite{DadushFDim,FixedDimILPSurv_Eisen,DadushDis,Convic,ConvicComp,DConvic,FixedDimILP_Eisen}), the integer feasibility and integer linear optimization problems, for any polytope defined by a linear inequalities system, can be solved by a polynomial-time algorithm. In the \emph{integer points counting problem} we need to calculate the value of $|\SC \cap \ZZ^n|$, which is, by the same reasons, a $\#\!P$-hard problem. Due to seminal work of Barvinok \cite{Barv_Original} (see also the works \cite{BarvPom,BarvWoods,OnBarvinoksAlg_Dyer,HalfOpen} and the books \cite{BarvBook,AlgebracILP,counting_Lasserre_book,continuous_discretely}), assuming that dimension is fixed, the last problem can be solved by a polynomial-time algorithm for general polytopes.

Important classes of simplices (and even more general polytopes) are \emph{lattice-simplices} and \emph{empty lattice-simplices}. A simplex $\SC$ (or a general polytope) is called \emph{lattice-simplex}, if $\vertex(\SC) \subseteq \ZZ^n$. A lattice-simplex $\SC$ (or a general lattice-polytope) is called \emph{empty}, if $\SC \cap \ZZ^n = \vertex(\SC)$. In our work, a simplex $\SC$, which is not a lattice-simplex, is called \emph{empty}, if $\SC \cap \ZZ^n = \emptyset$. 

The empty lattice-polytopes appear in many important works concerning the theory of integer programming. They are one of the central objects in theory of cutting planes \cite{IntersectionCuts_Init,IneqFromTwoRows_SimplexTable,MinimalValidIneq,MixedRelaxations_LatticeFree,GeometricApproach_Cutting}, and they can be used to construct optimality certificates in convex integer optimization \cite{BlairConstructive_1,BlairConstructive_2,BHull,Wolsey_Duality,Conic_Dual,OptimalitySertif_HellyNumbers,DualityForMixedConvex,LatticeFree_Gradient}. From the perspective of algebraic geometry, empty simplices are almost in bijection with the terminal quotient singularities, see, for example, \cite{Empty4Simplex_Init,TerminalSingul34,Empty4Simplex_Complete,Borisov_Quotientsingularities}. The problem to classify all $4$-dimensional empty lattice-simplices was completely solved in the paper \cite{Empty4Simplex_Complete}. Earlier results in this direction could be found, for example, in the works \cite{Empty4Simplex_Init,IntroEmptyLatticeSimplicies,Ziegler_width,Empty4Simplex_Width2}. The $3$-dimensional classification problem was solved by White \cite{LatticeTetrahedra} (see also \cite{IntroEmptyLatticeSimplicies,TerminalSingul34}). Important structural properties of general $3$-dimensional empty lattice-polytopes are presented in \cite{StructureOfSimpleSetsZ3}.

An important parameter that is strongly connected with the emptiness property of a simplex, lattice-simplex, or a general lattice-free convex body is the \emph{lattice-width} or, simply, the \emph{width}:
\begin{definition}\label{width_def}
    For a convex body $\PC \subseteq \RR^n$ and a vector $c \in \ZZ^n \setminus \{\BZero\}$, define
    \begin{gather*}
    \width_c(\PC) = \max\limits_{x \in \PC} c^\top x - \min\limits_{x \in \PC} c^\top x, \quad\text{and}\\
    \width(\PC) = \min\limits_{c \in \ZZ^n \setminus \{\BZero\}} \bigl\{\width_c(\PC)\bigr\}.
    \end{gather*}

    The value of $\width(\PC)$ is called the \emph{lattice-width} (or simply the \emph{width}) of $\PC$.
\end{definition}

This connection can be derived from the famous work of A.~Khinchin, and it is known by the name \emph{Flatness Theorem}.
\begin{theorem}[Flatness Theorem, A.~Khinchin \cite{flatness_Khinchine}]
    Let $\PC \subseteq \RR^n$ be a convex body with $\PC \cap \ZZ^n = \emptyset$.
    $$
    \text{Then,}\quad \width(\PC) \leq \omega(n),
    $$ where $\omega(n)$ is some function that depends only on $n$.
\end{theorem}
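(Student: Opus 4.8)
The plan is to prove the contrapositive: if the lattice-width of $\PC$ exceeds a suitable function of $n$, then $\PC$ must contain an integer point. The first step is to replace the arbitrary convex body by an ellipsoid, for which the width admits an exact algebraic description. By John's theorem, the maximal-volume inscribed ellipsoid $E \subseteq \PC$, with center $z$, satisfies $\PC \subseteq z + n\,(E - z)$; since width is linear under dilation, $\width_c(E) \leq \width_c(\PC) \leq n\,\width_c(E)$ for every $c$, and taking the minimum over $c \in \ZZ^n \setminus \{\BZero\}$ gives $\width(\PC) \leq n \cdot \width(E)$. Because $E \subseteq \PC$ and $\PC \cap \ZZ^n = \emptyset$, the ellipsoid $E$ is itself free of integer points, so it suffices to bound $\width(E)$ by a function of $n$.

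Next I would bring the ellipsoid into canonical form. Writing $E = \{ x \colon (x - z)^\top Q^{-1}(x - z) \leq 1 \}$ with $Q$ positive definite, the linear substitution $x \mapsto Q^{-1/2} x$ turns $E$ into the unit Euclidean ball centered at $w := Q^{-1/2} z$ and turns $\ZZ^n$ into the lattice $\Lambda := Q^{-1/2}\ZZ^n$. Two quantities then translate cleanly. On the one hand $\width_c(E) = 2\sqrt{c^\top Q c}$, and as $c$ ranges over $\ZZ^n \setminus \{\BZero\}$ the vector $Q^{1/2} c$ ranges over the nonzero points of the dual lattice $\Lambda^* = Q^{1/2}\ZZ^n$; hence $\width(E) = 2\,\lambda_1(\Lambda^*)$, twice the length of a shortest nonzero dual vector. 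On the other hand, emptiness of $E$ says precisely that every point of $\Lambda$ lies at distance greater than $1$ from $w$, which forces the covering radius to satisfy $\mu(\Lambda) > 1$.

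The remaining, and genuinely hard, step is a \emph{transference inequality} bounding the product of the primal covering radius and the dual minimum length, namely $\lambda_1(\Lambda^*)\,\mu(\Lambda) \leq C(n)$ with $C(n)$ depending only on $n$ (in fact $C(n) = O(n)$ by Banaszczyk's theorem). Granting it, one gets $\width(E) = 2\lambda_1(\Lambda^*) \leq 2C(n)/\mu(\Lambda) < 2C(n)$, and therefore $\width(\PC) \leq n \cdot \width(E) < 2n\,C(n) =: \omega(n)$, as desired. I expect this transference bound to be the main obstacle, since it is the arithmetic heart of the statement. For the sole purpose of exhibiting \emph{some} admissible $\omega(n)$ one need not invoke the sharp constants: it is enough to combine a classical covering-radius estimate of the form $\mu(\Lambda) \leq c_1(n)\,\lambda_n(\Lambda)$, which together with $\mu(\Lambda) > 1$ yields $\lambda_n(\Lambda) > 1/c_1(n)$, with the elementary transference $\lambda_1(\Lambda^*)\,\lambda_n(\Lambda) \leq c_2(n)$ obtained by applying Minkowski's first and second theorems to $\Lambda$ and $\Lambda^*$. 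This route produces a crude but fully sufficient bound $\lambda_1(\Lambda^*) < c_1(n)\,c_2(n)$, and hence a valid $\omega(n)$.
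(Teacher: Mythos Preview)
The paper does not prove this statement at all: the Flatness Theorem is quoted as a classical background result, attributed to Khinchin and accompanied by a list of references \cite{Babai_flatness,Lagarias_flatness,CoveringMinima_flatness,Banaszczyk_transferene,EllFlatness,ViaLocalTheoryBanach} for various proofs and quantitative bounds on $\omega(n)$. There is therefore nothing in the paper to compare your argument against.

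That said, your sketch is one of the standard routes and is essentially sound. The reduction to an ellipsoid via John's theorem, the reformulation of $\width(E)$ as $2\lambda_1(\Lambda^*)$ and of emptiness as $\mu(\Lambda)>1$, and the appeal to a transference inequality $\lambda_1(\Lambda^*)\,\mu(\Lambda)\le C(n)$ are all correct and together yield the theorem with $\omega(n)=2n\,C(n)$. The only place where you are a bit optimistic is the final ``elementary'' paragraph: the bound $\lambda_1(\Lambda^*)\,\lambda_n(\Lambda)\le c_2(n)$ does not fall out of Minkowski's first and second theorems alone, since Minkowski's second theorem bounds the \emph{product} $\prod_i \lambda_i(\Lambda)$ but gives no direct upper bound on $\lambda_n(\Lambda)$ in terms of $\det(\Lambda)$. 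An elementary proof of this transference step does exist (for instance via Korkine--Zolotarev or LLL-reduced bases, as in Lagarias--Lenstra--Schnorr), but it requires a basis-reduction argument rather than just the two Minkowski theorems. This is a minor point of attribution rather than a genuine gap in the overall strategy.
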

Many proofs of the Flatness Theorem have been presented in \cite{Babai_flatness,Lagarias_flatness,CoveringMinima_flatness,Banaszczyk_transferene,EllFlatness,ViaLocalTheoryBanach}, each giving different asymptotic estimates on $\omega(n)$. Combining the works of Banaszczyk, Litvak, Pajor \& Szarek \cite{ViaLocalTheoryBanach} and Rudelson \cite{Rudelson_flatness}, the currently best upper bound is $\omega(n) = n^{4/3} \cdot \log^{O(1)}(n)$. With restriction to simplices, it was shown \cite{ViaLocalTheoryBanach} that $\omega(n) = O(n \log(n))$. The same bound on $\omega(n)$ with restriction to symmetric convex bodies, was shown by Banaszczyk \cite{EllFlatness}. There is a sequence of works \cite{Kantor_width,IntroEmptyLatticeSimplicies,HollowPoly_LargeWidth,EmptySimplices_widthMore_d,EmptySimplices_width2d} devoted to establish a lower bound for $\omega(n)$. The state of the art result of this kind belongs to the work \cite{EmptySimplices_width2d} of Mayrhofer, Schade \& Weltge, where the construction of an empty lattice-simplex of the width $2n - o(d)$ is presented. It is conjectured in \cite{ViaLocalTheoryBanach} that $\omega(n) = \Theta(n)$, even restricting the values of $\omega(n)$ to simplices. To the best of our knowledge, the only one non-trivial case, when the conjecture was verified, is the case of ellipsoids, due to \cite{EllFlatness}.

Due to \cite{IntroEmptyLatticeSimplicies}, it is a $N\!P$-complete problem to decide that $\width(\SC) \leq 1$ for a given lattice-simplex $\SC$. However, as it was noted in \cite{IntroEmptyLatticeSimplicies}, assuming that dimension is fixed, the problem again can be solved by a polynomial-time algorithm.

\subsection{The class of $\Delta$-modular simplices}

It turns out that it is possible to define an interesting and substantial parameter of the system $A x \leq b$, which significantly affects complexity of the considered problems. 

\begin{definition}\label{delta_def}
For a matrix $A \in \ZZ^{m \times n}$, by $$
\Delta_k(A) = \max\left\{\abs{\det (A_{\IC \JC})} \colon \IC \subseteq \intint m,\; \JC \subseteq \intint n,\; \abs{\IC} = \abs{\JC} = k\right\},
$$ we denote the maximum absolute value of determinants of all the $k \times k$ sub-matrices of $A$. Here, the symbol $A_{\IC \JC}$ denotes the sub-matrix of $A$, which is generated by all the rows with indices in $\IC$ and all the columns with indices in $\JC$.

Note that $\Delta_1(A) = \|A\|_{\max} = \max_{i j} |A_{i j}|$. Additionally, let $\Delta(A) = \Delta_{\rank(A)}(A)$. The matrix $A$ with $\Delta(A) \leq \Delta$, for some $\Delta > 0$, is called \emph{$\Delta$-modular}. The polytope $\PC$, which is defined by a system $A x \leq b$ with a $\Delta$-modular matrix $A$, is called \emph{$\Delta$-modular polytope}.     
\end{definition}

Surprisingly, all the mentioned problems restricted on $\Delta$-modular simplices with $\Delta = \poly(\phi)$, where $\phi$ is the input length, can be solved by a polynomial time algorithms. Such algorithms are also known as the FPT-algorithms. Definitely, restricting the problems on simplices, due to \cite[Theorem~14]{OnCanonicalProblems_Grib} the integer feasibility problem can be solved with $O\bigl(n + \min\{n,\Delta\} \cdot \Delta \cdot \log(\Delta) \bigr)$ operations. Due to \cite[Corollary~9]{OnCanonicalProblems_Grib} (see also \cite{FPT_Grib}), the integer linear optimization problem can be solved with $O\bigl(n + \Delta^2 \cdot \log(\Delta) \cdot \log(\Delta_{\gcd})\bigr)$ operations. Due to Gribanov \& Malyshev \cite{Counting_FPT_Delta} and a modification from Gribanov, Malyshev \& Zolotykh \cite{SparseILP_Gribanov}, the integer points counting problem can be solved with $O(n^4 \cdot \Delta^3)$ operations. Due to Gribanov, Malyshev, Pardalos \& Veselov \cite{FPT_Grib} (see also \cite{WidthSimplex_Grib}), the lattice-width can be computed with $\poly(n,\Delta,\Delta_{ext})$-operations, where $\Delta_{ext} = \Delta(A\,b)$ and $(A\,b)$ is the system's extended matrix. Additionally, for empty simplices, the complexity dependence on $\Delta_{ext}$ can be avoided, which gives the bound $\poly(n,\Delta)$. The analogous result for lattice-simplices defined by convex hulls of their vertices is presented in Gribanov, Malyshev \& Veselov \cite{WidthConv_Grib}.

Additionally, it was shown by Gribanov \& Veselov \cite{Width_Grib} that it is possible to prove a variant of the Flatness Theorem for simplices such that the function $\omega$ will depend only on the sub-determinants spectrum of $A$ instead of $n$. More precisely, the following statement is true: for any empty simplex $\SC$, the inequality $\width(\SC) < \Delta_{\min}(A)$ holds, where $\Delta_{\min}(A)$ is the minimum between all rank-order nonzero sub-determinants of $A$ taken by an absolute value. Additionally, if the inequality is not satisfied, then some integer point inside $\SC$ can be found by a polynomial-time algorithm. The last inequality for empty simplices was improved to $\width(\SC) < \lfloor \Delta_{\min}/2 \rfloor$ in the work \cite{DeltaSimplexWidth_Improved} of Henk, Kuhlmann \& Weismantel. For a general $\Delta$-modular empty polytope $\PC$, the inequality $\width(\SC) \leq (\Delta_{\lcm}(A)-1) \cdot \frac{\Delta(A)}{\Delta_{\gcd}(A)} \cdot (n+1)$ was shown in \cite{Width_Grib}, where $\Delta_{\gcd}(A)$ and $\Delta_{\lcm}(A)$ are the gcd and lcm functions of rank-order sub-determinants of $A$ taken by an absolute value. Additionally, if the inequality is not satisfied, then $\PC$ contains a lattice-simplex, and some vertex of this simplex can be found by a polynomial-time algorithm. Recently, in the work \cite{WidthDelta_Improved} of Celaya, Kuhlmann, Paat \& Weismantel, the last inequality for empty polytopes was improved to $\width(\SC) < \frac{4n+2}{9} \cdot \Delta(A)$.

\subsection{The main motivation of the work and results}

To formulate the main problems and results, we need first to make some additional definitions. For a unimodular matrix $U \in \ZZ^{n \times n}$ and an integer vector $x_0 \in \ZZ^n$, the affine map $\UC(x) = U x + x_0$ is called \emph{unimodular}. Simplices $\SC_1$ and $\SC_2$ are called \emph{unimodular equivalent}, if there exists an unimodular affine map $\UC$ such that $\UC(\SC_1) = \SC_2$. It is easy to see that this relation partitions all the simplices into equivalence classes.

It was shown in \cite[Theorem~16]{OnCanonicalProblems_Grib} that the number of empty $\Delta$-modular simplices modulo the unimodular equivalence relation is bounded by
\begin{equation}\label{old_bound}
    \binom{n+\Delta-1}{\Delta-1} \cdot \Delta^{\log_2(\Delta) + 2},
\end{equation} which is a polynomial, for fixed $\Delta$. This result motivates the following
\begin{problem}[Empty $\Delta$-modular Simplices Enumeration]\label{enum_problem}
    For a given value of the parameter $\Delta$, enumerate all the unimodular equivalence classes of $\Delta$-modular empty simplices and empty lattice-simplices.
\end{problem}

To solve the last problem, it is important to have an algorithm that can check the unimodular equivalence relation.
\begin{problem}[Unimodular Equivalence Checking]\label{equiv_problem}
For given simplices $\SC$ and $\TC$, determine whether the simplices $\SC$ and $\TC$ are unimodular equivalent. 
\end{problem}

As the main results we show that the both problems can be solved by polynomial-time algorithms, assuming that the value of $\Delta$ is fixed. Unfortunately, the new algorithms do not belong to the FPT-class. The formal definitions of our results are presented in the following theorems.  

\begin{theorem}\label{equiv_th}
Let $\SC$ and $\TC$ be $\Delta$-modular simplices. The unimodular equivalence relation for $\SC$ and $\TC$ can be checked by an algorithm with the complexity 
$$
n^{\log_2(\Delta)} \cdot \Delta! \cdot \poly(n,\Delta),
$$ which is polynomial, for fixed values of $\Delta$.
\end{theorem}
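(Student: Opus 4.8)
The plan is to reduce the equivalence test to enumerating a pruned family of candidate facet correspondences and to verify each candidate by a single linear-algebra computation over $\QQ$. Write $\SC = \{x : A x \leq b\}$ and $\TC = \{y : B y \leq c\}$ with $A, B \in \ZZ^{(n+1)\times n}$ of rank $n$; after dividing each row by the gcd of its entries we may assume all facet normals are primitive, so a unimodular map $\UC(x) = U x + x_0$ sends $\SC$ to $\TC$ exactly when there is a permutation $\pi$ of $\{0,\dots,n\}$ with $U^{-\top} a_i = \pm b_{\pi(i)}$ for all $i$ (the signs being pinned down by the common outer-normal orientation of the two simplices), together with the matching of right-hand sides forced by $\UC$. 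Since $\rank(A) = n$, any $n$ linearly independent rows of $A$ together with their prescribed images determine $U$, and hence the whole affine map, uniquely over $\QQ$. Thus the entire problem is the combinatorial task of producing a correct $\pi$; the naive bound of $(n+1)!$ correspondences is exactly what must be improved to $n^{\log_2(\Delta)}\cdot\Delta!$.

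First I would extract the coarse unimodular invariant attached to each facet: the minor $d_i$ equal to the absolute value of the determinant of the $n\times n$ matrix obtained from $A$ by deleting its $i$-th row, which is the lattice multiplicity of the tangent cone at the opposite vertex. Because $U$ is unimodular, each $d_i$ is preserved by any equivalence, and every $d_i$ is an $n\times n$ minor of a $\Delta$-modular matrix, hence $1 \leq d_i \leq \Delta$. A necessary condition for equivalence is therefore that $\SC$ and $\TC$ carry the same multiset of multiplicities, and any admissible $\pi$ must map facets of $\SC$ to facets of $\TC$ of equal multiplicity. This already confines $\pi$ to the product of the symmetric groups of the multiplicity classes, but a single class where many vertices share the same multiplicity can still have size up to $n+1$, so a finer invariant is needed.

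The refinement I would use is a \emph{canonical labelling} of the facets obtained from the lattice geometry of the vertex cones. Translating a chosen vertex to the origin and reducing the matrix of its $n$ incident facet normals to Hermite Normal Form produces a representative that is invariant under $GL_n(\ZZ)$ up to the reordering of the incident facets and the residual unimodular automorphisms of the normal form; since the cone has multiplicity at most $\Delta$, only boundedly much data is left unresolved. Iterating this reduction across the simplex — precisely the mechanism behind the $\Delta^{\log_2(\Delta)+2}$ factor in the class-count \eqref{old_bound} of \cite{OnCanonicalProblems_Grib} — canonically fixes the images of all but a controlled ``core'' of facets, and organises the remaining freedom into a permutation of at most $\Delta$ interchangeable facets and a branching of depth $\log_2(\Delta)$ and width $n$ coming from the successive halving of the determinant during the normal-form reconstruction. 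This is the bookkeeping that is meant to yield the factor $n^{\log_2(\Delta)}\cdot\Delta!$. The algorithm then computes the normalized facet systems, the multiplicity multisets, and the canonical labellings of both simplices and rejects immediately if these invariants disagree; otherwise it enumerates the $n^{\log_2(\Delta)}\cdot\Delta!$ surviving correspondences $\pi$, and for each it solves the determined linear system for $U$ and $x_0$ over $\QQ$ and verifies in $\poly(n,\Delta)$ time that $U \in GL_n(\ZZ)$, that $x_0 \in \ZZ^n$, and that the induced map matches the remaining normals and all right-hand sides, accepting iff some candidate passes. All arithmetic is on numbers of size $\poly(n,\Delta,\text{input})$ because every minor is bounded by $\Delta$, giving the claimed running time.

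The main obstacle is the third step: proving that the canonical labelling really does cut the number of surviving correspondences down to $n^{\log_2(\Delta)}\cdot\Delta!$. This is a quantitative statement about the automorphism group of a bounded-multiplicity simplicial cone and about how these local symmetries interact globally across the $n+1$ vertices; establishing it rigorously requires the recursive normal-form analysis underlying \cite{OnCanonicalProblems_Grib}, and one must check carefully that the per-vertex ambiguities do not multiply beyond the stated bound. By contrast, the reduction to correspondence-enumeration and the per-candidate verification are routine integer linear algebra, fully controlled by the $\Delta$-modularity of $A$ and $B$.
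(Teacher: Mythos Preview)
Your overall architecture—reduce to a search over facet correspondences and verify each candidate by linear algebra—is the same as the paper's, and your diagnosis that the whole difficulty lies in bounding the number of correspondences by $n^{\log_2(\Delta)}\cdot\Delta!$ is exactly right. But the mechanism you sketch for achieving that bound (per-vertex canonical labellings, ``iterating the reduction across the simplex'', worrying about how local automorphisms interact globally) is vaguer than it needs to be, and as you yourself note, you do not actually prove the bound.

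The paper closes this gap with a single global normal form rather than a collection of local ones. One first fixes which of the $n+1$ rows of $A$ is the ``last'' row $c^\top$ (at most $n+1$ choices, absorbed into $\poly(n)$), insisting that the remaining $n\times n$ block has $|\det|=\Delta$. Reducing that block to HNF yields
\[
H=\begin{pmatrix} I_s & \BZero\\ B & T\end{pmatrix},\qquad k=n-s\le\log_2\Delta,
\]
and one then argues that \emph{every} equivalent normalized system arises from some row permutation of this one followed by re-normalization. The counting is now direct: a permutation of the $n$ rows of $H$ is specified by (i) choosing the $k$ positions that the rows of $(B\;T)$ will occupy, $\binom{n}{k}\le n^{\log_2\Delta}$ choices; (ii) permuting those $k$ rows among themselves, $k!$ choices; and (iii) permuting the $s$ identity rows. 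The crucial observation for (iii) is that a permutation of identity rows is \emph{redundant}—it can be undone by a unimodular column operation—precisely when it permutes equal columns of $B$; since the entries of $B$ lie in $\prod_i[0,T_{ii})$ there are at most $\Delta$ distinct columns, so at most $\Delta!$ non-redundant permutations survive. Multiplying gives the stated bound. This replaces your appeal to a ``recursive normal-form analysis'' with a one-line structural fact about the HNF, and avoids entirely the issue of gluing per-vertex symmetries.
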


\begin{theorem}\label{enum_th}
    We can enumerate all representatives of the unimodular equivalence classes of the $\Delta$-modular empty simplices and empty lattice-simplices by an algorithm with the complexity bound  
    $$
    O\bigl(n + \Delta\bigr)^{\Delta-1} \cdot n^{\log_2(\Delta)+O(1)},
    $$
    which is polynomial, for fixed values of $\Delta$.
\end{theorem}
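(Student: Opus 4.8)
The plan is to run a \emph{generate--validate--deduplicate} scheme whose three costs are balanced against the counting bound \eqref{old_bound}, treating the empty-simplex and the empty-lattice-simplex families in a single sweep. The starting point is that the proof of \cite[Theorem~16]{OnCanonicalProblems_Grib} does more than bound the number of classes: after a unimodular change of variables it attaches to each class a canonical system $A x \leq b$ whose data splits into two parts. The first is the \emph{type}, the multiset of maximal minors $d_i = \abs{\det A_{(i)}}$, where $A_{(i)} \in \ZZ^{n\times n}$ is $A$ with its $i$-th row removed; for a bounded simplex every $d_i \in \{1,\dots,\Delta\}$, and after fixing one minor by scaling this multiset ranges over at most $\binom{n+\Delta-1}{\Delta-1}$ values, which is the binomial factor of \eqref{old_bound}. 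The second is bounded \emph{arithmetic data}: the product of the diagonal pivots of a column Hermite normal form of $A$ equals one of the $d_i$ and is thus at most $\Delta$, so at most $\log_2(\Delta)$ of these pivots exceed $1$; consequently the essential arithmetic of $A$, together with the residue of $b$ modulo $A\ZZ^n$, is confined to a structure whose size is bounded in terms of $\Delta$ alone, so that \emph{for each fixed type there are only $\Delta^{O(\log_2\Delta)}$ admissible completions, a number independent of $n$}.

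First I would make this parameterisation effective. Enumerating all admissible types costs $\binom{n+\Delta-1}{\Delta-1} = O(n+\Delta)^{\Delta-1}$ steps, and for each type I would list the $\Delta^{O(\log_2\Delta)}$ reconstructions of a full system $(A,b)$ in the above normal form. Here the flatness estimate $\width(\SC) < \Delta_{\min}(A) \le \Delta$ for empty simplices from \cite{Width_Grib} is what keeps $b$ finite: it forces the body into a slab of bounded width, so once $A$ is fixed only boundedly many right-hand sides can yield an empty simplex.

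Next I would validate each candidate in $\poly(n,\Delta)$ time. Full column rank and $\Delta$-modularity are read off from the minors; boundedness and full-dimensionality are decided from the (up to scaling unique) linear dependency $\sum_i \lambda_i a_i = \BZero$ among the rows of $A$, which for a bounded full-dimensional simplex must have all $\lambda_i$ of one sign; emptiness is tested by the $O\bigl(n + \min\{n,\Delta\}\cdot\Delta\cdot\log\Delta\bigr)$-time feasibility algorithm of \cite[Theorem~14]{OnCanonicalProblems_Grib}; and for the empty-lattice-simplex family I would additionally require that each vertex, recovered by solving the square system $A_{(i)} x = b_{(i)}$ (with $b_{(i)}$ the corresponding subvector of $b$), be integral and that the simplex contain no lattice point other than its vertices.

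Finally comes deduplication, where the structure above is decisive and where I expect the only genuine difficulty to lie. Since the multiset of maximal minors is invariant under the action $A \mapsto A U^{-1}$, $b \mapsto b + A U^{-1} x_0$ and under relabelling of facets, candidates of different type are automatically inequivalent; it therefore suffices to remove duplicates \emph{within each type}, and each type carries only $\Delta^{O(\log_2\Delta)}$ surviving candidates. Running the equivalence test of Theorem~\ref{equiv_th} on all pairs inside one type and keeping one representative per class costs $n^{\log_2(\Delta)}\cdot\Delta!\cdot\poly(n,\Delta)$; multiplying the $O(n+\Delta)^{\Delta-1}$ types by this per-type cost gives the claimed bound $O(n+\Delta)^{\Delta-1}\cdot n^{\log_2(\Delta)+O(1)}$. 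The hard part is the structural core of the first step: proving \emph{completeness} --- that the normal form reaches every class --- and the $n$-independence of the number of completions per type, and in particular extending the argument of \cite[Theorem~16]{OnCanonicalProblems_Grib} to the empty-lattice-simplex family. Once this is secured, grouping by the type invariant is exactly what keeps deduplication linear, rather than quadratic, in the number of candidates, and hence what pins the final exponent at $\Delta-1$ instead of $2(\Delta-1)$.
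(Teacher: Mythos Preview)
Your generate--validate--deduplicate scheme and your identification of the HNF structure $H=\bigl(\begin{smallmatrix} I_s & 0 \\ B & T \end{smallmatrix}\bigr)$ are on target and match the paper's Lemma~\ref{enum_lm}. The gap is in the deduplication step, specifically the claim that ``for each fixed type there are only $\Delta^{O(\log_2\Delta)}$ admissible completions, a number independent of $n$''. Your argument for it only bounds the $T$-block; the $B$-block has $s\approx n$ columns and is \emph{not} determined by the multiset of maximal minors. For instance, the single type ``all $n{+}1$ minors equal $\Delta$'' forces $t=\BUnit$ and hence $c=-H^\top\BUnit$, but $H$ can still range over every normalized HNF with $\det H=\Delta$; this one type therefore carries on the order of $\binom{n+\Delta-1}{\Delta-1}\sim n^{\Delta-1}$ normalized candidates. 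Pairwise equivalence testing via Theorem~\ref{equiv_th} inside such a type then costs roughly $n^{2(\Delta-1)+\log_2\Delta}$, overshooting the stated bound.

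The paper avoids this with a different deduplication mechanism (Lemma~\ref{equiv_lm}): for each candidate $\SC$ it \emph{generates} the complete list of normalized systems unimodularly equivalent to $\SC$---there are at most $(n{+}1)\cdot\binom{n}{k}\cdot k!\cdot\Delta!$ of them, obtained by running the normalization algorithm of Lemma~\ref{form_lm} on every row-permutation of the defining system, after discarding the redundant permutations that merely shuffle equal columns of $B$---and deletes each one from a balanced search tree storing $\SScr$ (respectively $\LS$). The total work is thus $|\SScr|$ times the per-candidate enumeration cost $n^{\log_2\Delta}\cdot\Delta!\cdot\poly(n,\Delta)$, i.e.\ linear rather than quadratic in the number of candidates; it is this linearity, not any candidates-per-type bound, that keeps the final exponent at $\Delta-1$.
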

The proof is given in the next Section \ref{proofs_sec}.

\section{Proof of Theorems \ref{equiv_th} and \ref{enum_th}}\label{proofs_sec}

\subsection{The Hermite Normal Form}\label{HNF_subs}

Let $A \in \ZZ^{m \times n}$ be an integer matrix of rank $n$, assuming that the first $n$ rows of $A$ are linearly independent. It is a known fact (see, for example, \cite{Schrijver,HNFOptAlg}) that there exists a unimodular matrix $Q \in \ZZ^{n \times n}$, such that $A = \binom{H}{B} Q$, where $B \in \ZZ^{(m-n) \times n}$ and $H \in \ZZ_{\geq 0}^{n \times n}$ is a lower-triangular matrix, such that $0 \leq H_{i j} < H_{i i}$, for any $i \in \intint n$ and $j \in \intint{i-1}$. The matrix $\binom{H}{B}$ is called the \emph{Hermite Normal Form} (or, shortly, the HNF) of the matrix $A$. Additionally, it was shown in \cite{FPT_Grib} that $\|B\|_{\max} \leq \Delta(A)$ and, consequently, $\bigl\|\binom{H}{B}\bigr\|_{\max} \leq \Delta(A)$. Near-optimal polynomial-time algorithm to construct the HNF of $A$ is given in the work \cite{HNFOptAlg} of Storjohann \& Labahn.


\subsection{Systems in the normalized form and their enumeration}\label{norm_subs}

\begin{definition}\label{norm_def}
Assume that a simplex $\SC$ is defined by a system $A x \leq b$, for $A \in \ZZ^{(n+1)\times n}$ and $b \in \ZZ^{n+1}$.

The system $A x \leq b$ is called \emph{normalized}, if it has the following form:
$$
\binom{H}{c^\top} x \leq \binom{h}{c_0}, \quad\text{where}
$$
\begin{enumerate}
    \item The matrix $A = \binom{H}{c^\top}$ is the HNF of some integer $(n+1)\times n$ matrix and $|\det(H)| = \Delta(A)$;
    
    \item The matrix $H$ has the form $H = \begin{pmatrix} 
    I_{s} & \BZero_{s \times k} \\
    B & T \\
    \end{pmatrix}$, where $k+s = n$, $k \leq \log_2(\Delta)$, the columns of $B$ are lexicographically sorted, $T$ has a lower triangular form and $T_{i i} \geq 2$, for any $i \in \intint k$;

    \item For $i \in \intint n$, $0 \leq h_i < H_{i i}$. Consequently, $h_i = 0$, for $i \in \intint s$;

    \item For any inequality $a^\top x \leq a_0$ of the system $A x \leq b$, $\gcd(a, a_0) = 1$;
    
    \item $c \in \paral(-H^\top)$, where $\paral(M) = \bigl\{ M t \colon t \in (0,1]^n \bigr\}$, for arbitrary $M \in \ZZ^{n \times n}$;

    \item $\|A\|_{\max} \leq \Delta$.
    
\end{enumerate}
\end{definition}

\begin{lemma}\label{norm_prop_lm}
    The following properties of normalized systems hold:
    \begin{enumerate}
        \item The $5$-th and $6$-th conditions of Definition \ref{norm_def} are redundant, i.e. they follow from the remaining conditions and properties of $\SC$;

        \item Let $\SC$ be the simplex used in Definition \ref{norm_def}. Assume that $\SC$ is empty simplex or empty lattice-simplex, then $|c_0 - c^\top v| \leq \Delta$, where $v = H^{-1} h$ is the opposite vertex of $\SC$ with respect to the facet induced by the inequality $c^\top x \leq c_0$. 
    \end{enumerate}
\end{lemma}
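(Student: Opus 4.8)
The plan is to record two identities coming from the normalized form and then treat the two assertions in turn. Set $\mu := -(H^{-1})^\top c$; because $H$ is lower triangular with non-negative entries one gets $c^\top H^{-1} = -\mu^\top$, hence $c^\top v = c^\top H^{-1} h = -\mu^\top h$, and the corner cone $K := \{x : Hx \le h\}$ equals $v + \cone(-H^{-1}e_1, \dots, -H^{-1}e_n)$. Since $h \ge \BZero$ by condition~3, the origin lies in $K$. For condition~6 I would note that $\det(H) = \prod_i H_{ii}$, so each $H_{ii} \mid \det(H) = \Delta(A) \le \Delta$ by condition~1; with $0 \le H_{ij} < H_{ii}$ this gives $\|H\|_{\max} \le \Delta$, while $\|c\|_{\max} \le \Delta(A) \le \Delta$ is the HNF bound on the non-triangular block recalled in Subsection~\ref{HNF_subs}. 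For condition~5 the point is that $\SC$ is a bounded simplex, so its $n+1$ facet normals $H_1, \dots, H_n, c$ carry a strictly positive linear dependence; this forces $c = -H^\top \mu$ with $\mu > \BZero$. Cramer's rule applied to $H^\top \mu = -c$ gives $\mu_i = \lvert \det(H^{(i)}) \rvert / \lvert \det(H) \rvert$, where $H^{(i)}$ is $H$ with its $i$-th row replaced by $c^\top$; as $\lvert \det(H^{(i)}) \rvert$ is an order-$n$ minor of $A$, it is at most $\Delta(A) = \lvert \det(H) \rvert$, whence $\mu \in (0,1]^n$ and $c \in \paral(-H^\top)$.

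I will use that $\lvert c_0 - c^\top v \rvert = c_0 - c^\top v \ge 0$ coincides with $\width_c(\SC)$, so the task is to bound it by $\Delta$. In the empty-simplex case I would argue directly from the origin: since $\BZero \in K$ but $\SC \cap \ZZ^n = \emptyset$, the point $\BZero$ must violate the only inequality that can fail for it, namely $c^\top \BZero \le c_0$, giving $0 > c_0$. Then $c_0 - c^\top v < -c^\top v = \mu^\top h$, and it remains to estimate $\mu^\top h$. By condition~3 one has $h_i = 0$ for $i \le s$ and $h_i \le H_{ii} - 1 = T_{i-s,\,i-s} - 1$ otherwise, and $\mu_i \le 1$, so $\mu^\top h \le \sum_{j=1}^{k}(T_{jj} - 1)$. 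Finally the elementary estimate $\sum_{j=1}^{k}(T_{jj} - 1) \le \prod_{j=1}^{k} T_{jj} - 1$, which holds because every $T_{jj} \ge 2$ by condition~2, yields $\mu^\top h \le \det(T) - 1 = \Delta(A) - 1 < \Delta$.

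In the empty-lattice-simplex case the normalization forces $v = H^{-1} h \in \ZZ^n$; since $h$ is the box representative of its residue class modulo $H \ZZ^n$ this means $h = \BZero$, hence $v = \BZero$ and $\lvert c_0 - c^\top v \rvert = c_0$. Here I would use the witnesses $-e_j$: each column of $H$ is non-negative, so $H(-e_j) = -H_{\cdot j} \le \BZero = h$, i.e. $-e_j \in K \cap \ZZ^n$. Emptiness forces every such non-zero lattice point to lie outside $\SC$ or to coincide with one of the facet vertices $v_1, \dots, v_n$, and in either case $c^\top(-e_j) \ge c_0$; thus $c_0 \le -c_j$ for all $j$. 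Taking $j = n$ and using that column $n$ of $H$ equals $H_{nn} e_n$ gives $c_0 \le -c_n = \mu_n H_{nn} \le H_{nn} \le \Delta$, the last inequality because $H_{nn} \mid \det(H) = \Delta(A) \le \Delta$.

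The steps I expect to need the most care are (i) the strictly positive dependence of the facet normals used for condition~5, which rests on $\SC$ being bounded and on all $n+1$ facets being essential; and (ii) in the lattice case, the verification that a point $-e_j$ with $c^\top(-e_j) < c_0$ would be a lattice point of $\SC$ distinct from every vertex, contradicting emptiness. Granting these, the Cramer computation and the superadditivity bound are routine, and the two cases combine to give $\lvert c_0 - c^\top v \rvert \le \Delta$.
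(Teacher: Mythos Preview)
Your proof is correct, and your treatment of condition~5 via Cramer's rule is essentially the paper's argument in slightly different packaging. Where you genuinely diverge is in part~2. For the empty-simplex case the paper simply cites an earlier result (\cite[Theorem~7]{Width_Grib}), whereas you give a fully self-contained argument: the origin lies in the corner cone $K$ because $h\ge\BZero$, so emptiness forces $c_0<0$, and then the superadditivity estimate $\sum_j(T_{jj}-1)\le\prod_jT_{jj}-1$ finishes the bound. This is more elementary and even yields the slightly sharper $c_0-c^\top v\le\Delta(A)-1$. For the empty-lattice-simplex case the paper chooses the witness $p=v-u$ with $u$ the first column of the adjugate $H^*$, and bounds $|c^\top u|$ by observing that the entries of $c^\top H^*$ are $n\times n$ minors of $A$. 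You instead first observe that $v\in\ZZ^n$ together with $0\le h_i<H_{ii}$ forces $h=\BZero$, $v=\BZero$, and then use $-e_n$ as the witness, exploiting that the last column of $H$ is $H_{nn}e_n$ so that $-c_n=H_{nn}\mu_n\le H_{nn}\le\Delta$. Both arguments are short; yours trades the adjugate computation for a preliminary normalization ($h=\BZero$), while the paper's works without first pinning down $h$. The two points you flagged as needing care are indeed the only places where anything could go wrong, and your handling of them is fine: for (ii), if $c^\top(-e_j)<c_0$ then $-e_j$ satisfies all $n+1$ inequalities with the last one strict, hence cannot be a facet vertex, and it is not $v=\BZero$, so it contradicts $\SC\cap\ZZ^n=\vertex(\SC)$.
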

\begin{proof}
    Let us show that the condition $c \in \paral(-H^\top)$ is redundant. Definitely, since $\SC$ is a simplex, it follows that $c = -H^\top t$, for some $t \in \RR^n_{>0}$. Assume that there exists $i \in \intint n$ such that $t_i > 1$. Let $M$ be the matrix formed by rows of $H$ with indices in the set $\intint{n} \setminus \{i\}$ and a row $c^\top$. Then, $|\det(M)| = t_i \cdot |\det(H)| > \Delta$, which contradicts to the $\Delta$-modularity property of $\SC$. Hence, $0 < t_i \leq 1$, for all $i \in \intint n$, and $c \in \paral(-H^\top)$.

    As it was mentioned in Subsection \ref{HNF_subs}, for each $\Delta$-modular matrix $A$ that is reduced to the HNF, the inequality $\|A\|_{\max} \leq \Delta$ holds. So the $6$-th condition just follows from the first condition and the $\Delta$-modularity property of $A$.

    Let us prove the second property claimed in the lemma. In the case when $\SC$ is empty, the inequality $|c_0 - c^\top v| < \Delta$ was proven in \cite[Theorem~7]{Width_Grib} (see also \cite[Corollary~10]{OnCanonicalProblems_Grib}). Let us prove the inequality $|c^\top v - c_0| \leq \Delta$ in the case when $\SC$ is empty lattice-simplex. Consider a point $p = v - u$, where $u$ is the first column of the adjugate matrix $H^* = \det(H) \cdot H^{-1}$. Note that $p$ is an integer feasible solution for the sub-system $H x \leq h$. Since $\|c^\top H^*\|_{\infty} \leq \Delta$, it holds that $|c^\top p - c^\top v| \leq \Delta$. Then, the inequality $|c^\top v - c_0| > \Delta$ implies that $p \in \SC \setminus \vertex(\SC)$, which is the contradiction.
\end{proof}

The following lemma is one of the elementary building blocks for an algorithm to construct normalized systems.
\begin{lemma}\label{rhs_lm}
    Let $H \in \ZZ^{n \times n}$ be a non-degenerate matrix reduced to the HNF and $b \in \ZZ^n$. Then, there exists a polynomial-time algorithm that computes the unique translation $x \to x + x_0$ that maps the system $H x \leq b$ to the system $H x \leq h$ with the property $0 \leq h_i < H_{i i}$, for any $i \in \intint n$. 
\end{lemma}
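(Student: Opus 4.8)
The plan is to reduce the statement to an explicit triangular solve. First I would record what the requested translation does to the right-hand side: if the translation $\UC(x) = x + x_0$ maps $\{x \in \RR^n : H x \leq b\}$ onto $\{x \in \RR^n : H x \leq h\}$, then $y$ lies in the target polyhedron iff $y - x_0$ lies in the source one, i.e. iff $H(y - x_0) \leq b$, which is $H y \leq b + H x_0$. Thus the translation is admissible exactly when
\[
h = b + H x_0 .
\]
So the whole problem becomes: find $x_0 \in \ZZ^n$ such that every coordinate of $h = b + H x_0$ lies in $[0, H_{ii})$, and then show that this $x_0$ is unique and computable in polynomial time.

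Next I would exploit the lower-triangular shape of the HNF matrix $H$ to solve for $x_0$ by forward substitution, one coordinate at a time, for $i$ running from $1$ to $n$. Suppose $(x_0)_1, \dots, (x_0)_{i-1}$ are already fixed and set $r_i = b_i + \sum_{j \in \intint{i-1}} H_{ij} (x_0)_j$, where all terms with index exceeding $i$ vanish because $H$ is lower-triangular. The $i$-th coordinate of $h$ equals $h_i = r_i + H_{ii} (x_0)_i$, and the constraint $0 \leq h_i < H_{ii}$, together with $H_{ii} > 0$, forces $h_i$ to be the unique residue of $r_i$ modulo $H_{ii}$ in the window $[0, H_{ii})$; this in turn determines $(x_0)_i = (h_i - r_i)/H_{ii} = -\lfloor r_i / H_{ii} \rfloor \in \ZZ$ uniquely. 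Iterating over $i$ simultaneously yields the existence and the uniqueness of $x_0$, together with the corresponding $h$.

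It remains to certify polynomial running time, which is the only genuinely technical point and the step I expect to be the main obstacle: although the triangular solve is elementary and uniqueness is immediate from the residue constraint, one must rule out an exponential blow-up in the bit-length of the coordinates of $x_0$ during the substitution. To control this I would observe that the computed solution is exactly $x_0 = H^{-1}(h - b)$ with $\|h\|_{\max} < \|H\|_{\max}$ by construction, and bound $H^{-1}$ through the adjugate: every entry of $\det(H)\cdot H^{-1}$ is a cofactor of $H$, hence at most $(\sqrt{n}\,\|H\|_{\max})^{n-1}$ in absolute value by Hadamard's inequality, while $|\det(H)| \geq 1$. This gives $\|x_0\|_\infty \leq n\,(\sqrt{n}\,\|H\|_{\max})^{n-1}\,(\|H\|_{\max} + \|b\|_\infty)$, a quantity of polynomial bit-length in the input; consequently each intermediate $r_i$ is polynomially bounded as well, and the $n$ divisions-with-remainder of the forward substitution all run in polynomial time. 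Everything beyond this size estimate is routine bookkeeping.
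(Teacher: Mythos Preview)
Your argument is correct and follows essentially the same approach as the paper: exploit the lower-triangular shape of $H$ to determine $(x_0)_i$ one coordinate at a time via a division with remainder, which simultaneously gives existence and uniqueness. The only cosmetic discrepancy is that the paper reads ``maps the system'' as the change of variables $x \mapsto x + x_0$ inside $Hx \leq b$, arriving at $h = b - H x_0$ rather than your $h = b + H x_0$; this just flips the sign of $x_0$ and changes nothing substantive. Your treatment is in fact more careful than the paper's on the bit-size issue: the paper simply asserts the procedure is polynomial-time, whereas you justify it via the adjugate/Hadamard bound on $H^{-1}$.
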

\begin{proof}
Denote the $i$-th standard basis vector of $\RR^n$ by $e_i$. Consider the following algorithm: set $b^{(0)} := b$ and, for the index $i$ increasing from $1$ to $n$, we sequentially apply the integer translation $x \to x + \lfloor b^{(i-1)}/H_{i i} \rfloor \cdot e_i$ to the system $A x \leq b$, where $b^{(i)}$ denotes the r.h.s. of $A x \leq b$ after the $i$-th iteration. We have $b^{(i+1)} := b^{(i)} - \lfloor b^{(i)}/H_{i i} \rfloor H_{i}$, where $H_{i}$ is the $i$-th column of $H$. If we denote the resulting system r.h.s. by $h$, then $h = b^{(n)}$ and $h_i = b_i \bmod H_{i i}$, for $i \in \intint n$. Clearly, the algorithm that returns $h$ and the cumulative integer translation vector $x_0$ is polynomial-time. 

Note that the vectors $b$ and $h$ are additionally connected by the formula $h = b - H x_0$. Due to the HNF triangle structure, and since $h_1 = b_1 \bmod H_{1 1}$, it follows that $(x_0)_1 = \lfloor b_0/H_{1 1} \rfloor$, so the first component of $x_0$ is unique. Now, using the induction principle, assume that the first $k$ components of $x_0$ are uniquely determined by $H$ and $b$, and let us show that the same fact holds for $(x_0)_{k+1}$. Clearly, 
$$
h = b - H_{\intint k} (x_0)_{\intint k} - H_{\intint[k+1]{n}} (x_0)_{\intint[k+1]{n}}.
$$ Denoting $\hat h = h + H_{\intint k} (x_0)_{\intint k}$, we have
\begin{equation*}
\hat h = b - H_{\intint[k+1]{n}} (x_0)_{\intint[k+1]{n}}.
\end{equation*}
Note that $\hat h_{k+1} = h_{k+1} = b_{k+1} \bmod H_{(k+1)(k+1)}$. Again, due to the HNF triangle structure, it holds that $(x_0)_{k+1} = \lfloor b_{k+1} / H_{(k+1)(k+1)} \rfloor$, which finishes the proof.
\end{proof}

The following lemma proposes an algorithm for normalized systems construction. Additionally, it proves the fact that all equivalent normalized systems can be constructed by this way.
\begin{lemma}[The Normalization Algorithm]\label{form_lm}
Let $\SC$ be a $\Delta$-modular simplex defined by a system $A x \leq b$.

The following propositions hold:
\begin{enumerate}

    \item For a given base $\BC$ of $A$ with $|\det(A_{\BC})| = \Delta$, there exists a polynomial-time algorithm that returns a unimodular equivalent normalized system $
\binom{H}{c^\top} x \leq \binom{h}{c_0}
$ and a permutation matrix $P \in \ZZ^{n \times n}$ such that $H$ is the HNF of $P A_{\BC}$. Let this algorithm be named as the \emph{Normalization Algorithm};

\item Any normalized system $
\binom{H}{c^\top} x \leq \binom{h}{c_0}
$ that is unimodular equivalent to $\SC$ can be obtained as an output of the normalization algorithm with the input base $\intint n$ of a permuted system $P A x \leq P b$, for some permutation matrix $P \in \ZZ^{(n+1)\times(n+1)}$.
\end{enumerate}
\end{lemma}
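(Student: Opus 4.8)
The plan is to treat the two propositions separately, since the first is the explicit construction of the algorithm while the second is a canonicity (surjectivity) statement that rests on the uniqueness of the Hermite Normal Form.

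For the first proposition, I would build the Normalization Algorithm as a pipeline of canonical reductions. Starting from the base $\BC$ with $|\det(A_{\BC})| = \Delta$, I would first choose a permutation $P$ of the $n$ base rows so that the HNF $H = \HNF(P A_{\BC})$ acquires the block form $\begin{pmatrix} I_s & \BZero \\ B & T\end{pmatrix}$ required by condition~2 of Definition~\ref{norm_def}: since the product of the diagonal entries of any HNF of $A_{\BC}$ equals $\Delta$ and each entry is $\geq 1$, at most $\log_2(\Delta)$ of them exceed $1$, and $P$ is chosen to collect the unit-diagonal rows in the leading $s$ positions, giving $k = n - s \leq \log_2(\Delta)$ and $T_{ii} \geq 2$. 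The unimodular column transformation realizing this HNF is then applied to the whole system as a change of variables; the single leftover row of $A$ becomes the row $c^\top$ and its right-hand side becomes $c_0$. Next I would invoke Lemma~\ref{rhs_lm} to apply the unique integer translation reducing the $H$-part of the right-hand side to the range $0 \leq h_i < H_{ii}$ (condition~3), shifting $c_0$ accordingly. Dividing each inequality by the gcd of its coefficients enforces condition~4, and finally a simultaneous permutation of the first $s$ rows and columns — a relabeling of inequalities combined with a unimodular variable change that preserves the $I_s \mid T$ shape while permuting the columns of $B$ — lexicographically sorts the columns of $B$. Conditions~5 and~6 then hold automatically by Lemma~\ref{norm_prop_lm}(1). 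Each step (HNF via \cite{HNFOptAlg}, translation, gcd reduction, sorting) is polynomial-time, so the whole algorithm is.

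For the second proposition, suppose $\binom{H}{c^\top} x \leq \binom{h}{c_0}$ is any normalized system unimodular equivalent to $\SC$. Then there is a unimodular affine map together with a permutation of the $n+1$ inequalities carrying $A x \leq b$ onto it; let $P \in \ZZ^{(n+1)\times(n+1)}$ be that inequality permutation, chosen so that the base $\BC$ producing $H$ occupies the first $n$ rows of $P A$ and the remaining row occupies position $n+1$. Since $|\det(H)| = \Delta$ by condition~1 and $H$ is unimodularly equivalent to $(PA)_{\intint n}$, the base $\intint n$ is an admissible input for the algorithm. The heart of the argument is that the algorithm applied to $P A x \leq P b$ with base $\intint n$ is forced to reproduce exactly the given system: $H$ is already in HNF and equals $(PA)_{\intint n}$ times a unimodular matrix, so by uniqueness of the HNF it is the very matrix the algorithm computes; the column transformation is then uniquely determined as $(PA)_{\intint n}^{-1} H$, which pins down $c^\top$; the right-hand side reduction of Lemma~\ref{rhs_lm} is the unique admissible translation, giving back $h$ and $c_0$; and the gcd-normalization and lexicographic sort are canonical. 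Hence the given normalized system lies in the image of the algorithm.

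The routine parts are the pipeline of reductions in the first proposition and their polynomiality. I expect the main obstacle to be the canonicity argument in the second proposition: one must verify that the freedom in the unimodular equivalence — the stabilizer of the HNF shape together with the permutation of the first $s$ coordinates — is completely absorbed by the deterministic choices of the algorithm (the unique HNF, the unique translation from Lemma~\ref{rhs_lm}, and the lexicographic tie-breaking on the columns of $B$), so that every equivalent normalized system is hit. Care is also needed to confirm that a row permutation $P$ of the base always yields the required $I_s \mid T$ block structure, which relies on the diagonal of the HNF being rearrangeable so that all unit entries precede the entries $\geq 2$.
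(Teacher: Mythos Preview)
Your approach matches the paper's closely: the same pipeline (HNF, block-form permutation, lexicographic column sort, translation via Lemma~\ref{rhs_lm}, gcd removal) for Proposition~1, and the same canonicity argument (align facets by a row permutation $P$, then invoke uniqueness of the HNF and of the translation) for Proposition~2.

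There is one small gap in your Proposition~2 argument. You assert that $H$ equals $(PA)_{\intint n}$ times a unimodular matrix and hence is the HNF the algorithm computes. In general, however, the correspondence between the two systems reads $PAU = DA'$ and $P(b-Ax_0)=Db'$ for a positive integer \emph{diagonal} matrix $D$, coming from the fact that condition~4 forces the rows of $A'$ to be primitive while nothing forces this for $A$. Thus the HNF of $(PA)_{\intint n}$ is $D_{\intint n} H$, not $H$; the paper tracks $D$ explicitly through the translation step and then shows that the final gcd-removal step strips $D$ off to recover $A'x\leq b'$. Your version is easily repaired (for instance by assuming without loss of generality that each row of $(A\,b)$ already has gcd~$1$, so that $D=I$), but as written the uniqueness-of-HNF step does not literally apply.
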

\begin{proof}
Let us describe the normalization algorithm. First of all, we represent $A_{\BC} = H Q$, where $H \in \ZZ^{n \times n}$ is the HNF of $A_{\BC}$ and $Q \in \ZZ^{n \times n}$ is a unimodular matrix. After some straightforward additional permutations of rows and columns, we can assume that the original system is equivalent to the system
    $$
    \binom{H}{c^\top} x \leq \binom{h}{c_0},\quad\text{where $H = \begin{pmatrix} 
    I_{s} & \BZero_{s \times k} \\
    B & T \\
    \end{pmatrix}$},
    $$ $s + k = n$, $k \leq \log_2(n)$ and $T$ has a lower-triangular form with $T_{i i} \geq 2$, for $i \in \intint k$.
    
    To fulfil all the conditions of Definition \ref{norm_def}, we need some additional work with the matrix $B$ and vector $h$. We can fulfill the second condition in Definition \ref{norm_def} for $B$ just using any polynomial-time sorting algorithm. The new system will be unimodular equivalent to the original one, because column-permutations of the first $s$ columns can be undone by row-permutations of the first $s$ rows. Next, the third condition for $h$ can be satisfied just using the algorithm of Lemma \ref{rhs_lm}. Finally, the $4$-th condition can be easily satisfied just by dividing each line of the system on the corresponding value of the gcd function. Since all the steps can be done using polynomial-time algorithms, the whole algorithm is polynomial-time, which proves the first proposition of the lemma.

    Let us prove the second proposition. Assume that some unimodular equivalent normalized system $A' x \leq b'$ is given. 
Let $\SC'$ be the simplex defined by $A' x \leq b'$, and let $\UC(x) = U x' + x_0$ be the unique unimodular affine map that maps $\SC$ into $\SC'$. It follows that there exists a bijection between the facets of $\SC$ and $\SC'$ induced by $\UC$. Since any inequality of the system $A x \leq b$ defines some facet of $\SC$, it directly follows that there exists a permutation matrix $P \in \ZZ^{(n+1) \times (n+1)}$ such that the systems $P A x \leq P b$ and $A' x \leq b'$ are equivalent with one to one correspondence of each inequality of both systems. Since two arbitrary inequalities are equivalent modulo some positive homogeneity multiplier, and due to the condition 4 of Definition \ref{norm_def}, it follows that there exists a diagonal matrix $D \in \ZZ^{(n+1)\times (n+1)}$ with strictly positive integer diagonal elements such that
\begin{equation}\label{D_eq}
 P A U = D A' \quad\text{and}\quad P (b - A x_0) = D b'.
\end{equation}
Let us use the normalization algorithm with the input system $P A x \leq P b$ and base $\BC = \intint n$. The algorithm consists of three parts: the first part constructs the resulting matrix, the second part constructs the resulting r.h.s., the third part removes homogeneity multipliers from rows of the resulting system. Let us consider how the input system $P A x \leq P b$ will look after the first part of the algorithm. Since $D A'$ is the HNF of $P A$, the system will be $D A' x \leq P b$. Now, we apply the second part of the algorithm using Lemma \ref{rhs_lm} to the system $D A' x \leq P b$. Due to the equalities \eqref{D_eq},  
$$
D b' = P b - P A x_0 = P b - P A U t = P b - D A' t,\quad \text{for $t = U^{-1} x_0 \in \ZZ^n$}.
$$
Due to Lemma \ref{rhs_lm}, it holds that $t$ is the unique integer translation vector that transforms the system $D A' x \leq P b$ to the system $D A' x \leq D b'$. Consequently, the second part of the algorithm will produce the system $D A' x \leq D b'$, which will be transformed to the resulting normalized system $A' x \leq b'$ after the third part of the algorithm. The proof is finished.


\end{proof}

The following lemma helps to enumerate all equivalent normalized systems. 
\begin{lemma}\label{enum_lm}
    For a given parameter $\Delta$, there exists an algorithm that constructs two families of simplices $\SScr$ and $\LS$ with the following properties:
    \begin{enumerate}
        \item Each simplex of the families $\SScr$ and $\LS$ is $\Delta$-modular, and is defined by a normalized system;
        
        \item The simplices of $\SScr$ are empty, and all the unimodular equivalence classes of $\Delta$-modular empty simplices are represented as simplices from $\SScr$;
        
        \item The simplices of $\LS$ are lattice and empty, and all the unimodular equivalence classes of $\Delta$-modular empty lattice-simplices are represented as simplices from $\LS$.
    \end{enumerate}

    The computational complexity of the algorithm is 
    \begin{multline*}
    \binom{n+\Delta-1}{\Delta-1} \cdot \Delta^{\log_2(\Delta)} \cdot \poly(n,\Delta) = \\
    = O\left(\frac{n + \Delta}{\Delta}\right)^{\Delta-1} \cdot n^{O(1)},    
    \end{multline*}
    which is polynomial, for any fixed $\Delta$.
\end{lemma}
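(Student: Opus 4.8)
The plan is to turn the existence statement into a brute-force enumeration over the combinatorial data of a normalized system, and then to \emph{filter} the resulting candidates by emptiness. The engine of correctness is Lemma~\ref{form_lm}: any $\Delta$-modular simplex $\SC$ (empty or an empty lattice-simplex) admits a base $\BC$ of its constraint matrix with $\abs{\det(A_{\BC})} = \Delta$, and running the Normalization Algorithm on such a base produces a normalized system unimodular equivalent to $\SC$. Hence, if the enumeration loops over \emph{every} tuple of admissible normalized data, it is guaranteed to generate at least one representative of every unimodular equivalence class. So I would build a single generator of candidate normalized systems, retaining only tuples that satisfy every condition of Definition~\ref{norm_def}, populate $\SScr$ with those candidates defining empty simplices and $\LS$ with those defining empty lattice-simplices, and read off completeness from the previous sentence.

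For the generator I would range independently over the pieces of Definition~\ref{norm_def}. First the split $n = s + k$ with $k \le \log_2(\Delta)$, which is $\log_2(\Delta)+1$ choices. Then the lower-triangular block $T$: its diagonal is an ordered factorization of $\Delta$ into $k$ factors $\ge 2$, and for a fixed diagonal the subdiagonal entries obey the HNF bounds $0 \le T_{ij} < T_{ii}$. Since the factorizations of $\Delta$ number only $\poly(\Delta)$, and the subdiagonal choices are at most $\prod_i T_{ii}^{\,i-1} \le \Delta^{\log_2(\Delta)-1}$, the block $T$ ranges over $\Delta^{\log_2(\Delta)} \cdot \poly(\Delta)$ values. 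The block $B$ then consists of $s$ lexicographically sorted columns, each drawn from the $\prod_i T_{ii} = \Delta$ residue vectors allowed by the HNF bounds; a sorted list of $s$ columns from $\Delta$ types is a multiset, giving $\binom{s+\Delta-1}{\Delta-1} \le \binom{n+\Delta-1}{\Delta-1}$ choices. Finally the right-hand side $h$ ranges over the $\prod_i H_{ii} = \Delta$ vectors with $0 \le h_i < H_{ii}$, the normal $c$ ranges over the $\abs{\det(H)} = \Delta$ integer points of $\paral(-H^\top)$, and --- this is the key point --- by the second part of Lemma~\ref{norm_prop_lm} the scalar $c_0$ of an empty simplex or empty lattice-simplex satisfies $\abs{c_0 - c^\top v} \le \Delta$ with $v = H^{-1}h$, so only $O(\Delta)$ integer values of $c_0$ need be tried. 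Multiplying the counts and charging each candidate $\poly(n,\Delta)$ time for construction and testing yields exactly the claimed bound $\binom{n+\Delta-1}{\Delta-1}\cdot\Delta^{\log_2(\Delta)}\cdot\poly(n,\Delta)$, which rewrites as $O\bigl((n+\Delta)/\Delta\bigr)^{\Delta-1}\cdot n^{O(1)}$.

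The filtering is where the two families diverge. For $\SScr$ I would discard every candidate with $\SC \cap \ZZ^n \ne \emptyset$, testing feasibility with the FPT integer-feasibility routine for $\Delta$-modular simplices cited in the introduction; for $\LS$ I would additionally require that all $n+1$ vertices be integral and that the only integer points of $\SC$ be these vertices, which is certifiable in $\poly(n,\Delta)$ time by computing the vertices and counting $\abs{\SC \cap \ZZ^n}$ with the FPT counting routine and checking it equals $n+1$. Both tests are invariant under unimodular maps and under the gcd-normalization of rows, so they are well defined on equivalence classes.

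I expect the main obstacle to be the \emph{finiteness and correctness of the $c_0$ range}: the matrix data $H$ already has a bounded enumeration, but a priori the offset $c_0$ could take unboundedly many integer values, and it is precisely the emptiness hypothesis, channeled through Lemma~\ref{norm_prop_lm}, that collapses $c_0$ to an $O(\Delta)$-sized window around $c^\top v$ and thereby makes the whole enumeration finite with the stated count. A second point requiring care is completeness in the lattice case: I must confirm that a base achieving the maximum determinant $\Delta$ still exists and that normalization preserves the ``empty lattice-simplex'' property, so that every such class is genuinely hit; both follow since unimodular maps preserve vertices and lattice points and Lemma~\ref{norm_prop_lm} applies verbatim to empty lattice-simplices.
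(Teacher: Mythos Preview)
Your proposal is correct and follows essentially the same enumeration strategy as the paper: loop over the HNF block data $(k,T,B,h,c,c_0)$ with the same counting bounds, invoke Lemma~\ref{form_lm} for completeness, and use Lemma~\ref{norm_prop_lm} to bound the $c_0$ range. The one cosmetic difference is that the paper, rather than enumerating all $O(\Delta)$ candidate values of $c_0$ and filtering each with an FPT feasibility or counting test, instead solves a single group-type minimization problem (the problems~\eqref{group_prob1}--\eqref{group_prob3}) to compute directly the interval of $c_0$ values yielding an empty simplex (resp.\ the unique $c_0$ yielding a candidate empty lattice-simplex); both routes cost $\poly(n,\Delta)$ per tuple and give the same final bound.
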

\begin{proof}
    Due to Lemma \ref{form_lm}, to construct simplices of the sets $\SScr$ and $\LS$, it is sufficient to enumerate all the normalized systems $$
    \binom{H}{c^\top} x \leq \binom{h}{c_0},\quad\text{where $H = \begin{pmatrix}
        I_{s} & \BZero_{s \times k} \\
        B & T
    \end{pmatrix}$.}
    $$

    Let $\DC$ be the set of all nontrivial divisors of $\Delta$. Clearly, $\DC$ can be constructed in $\poly(\Delta)$-time. To enumerate all the normalized systems, the following simple scheme can be used:
    \begin{enumerate}
        \item Enumerate all the possible tuples $(d_1, d_2, \dots, d_k)$ of $\DC$ such that $d_1\cdot \ldots \cdot d_k = \Delta$. Tuples allow duplicates;
        \item For any fixed tuple $(d_1, d_2, \dots, d_k)$, enumerate the matrices $H = \begin{pmatrix} 
    I_{s} & \BZero_{s \times k} \\
    B & T \\
    \end{pmatrix}$ and the vectors $h$, where the tuple $(d_1, d_2, \dots, d_k)$ forms the diagonal of $T$;
        \item For any fixed $H$, enumerate the vectors $c \in \paral(-H^\top)$;
        \item For any fixed triplet $(H, h, c)$, enumerate all the values $c_0$ such that the corresponding simplex stays empty.
    \end{enumerate}

    The family of all tuples $(d_1, d_2, \dots, d_k)$ can be enumerated by the following way: assuming that values $d_1, d_2, \dots, d_j$ are already chosen from $\DS$ and $d_1 \cdot \ldots \cdot d_j < \Delta$, we chose $d_{j+1}$ such that the product $d_1 \cdot \ldots \cdot d_j \cdot d_{j+1}$ divides $\Delta$. Since this procedure consists of at most $\log_2(\Delta)$ steps, the enumeration complexity can be roughly bounded by
    \begin{multline*}
        \frac{\Delta}{2^0} \cdot \frac{\Delta}{2^1} \cdot \ldots \cdot \frac{\Delta}{2^{\lfloor \log_2(\Delta) \rfloor-1}} \leq \Delta^{\log_2(\Delta)} / 2^{\frac{(\log_2(\Delta)-2)(\log_2(\Delta)-1)}{2}} = \\
        = \frac{1}{2} \cdot \Delta^{3/2} \cdot \Delta^{\log_2(\Delta)/2}.
    \end{multline*}
    Here the inequalities $x - 1 \leq \lfloor x \rfloor \leq x$ has been used.
    
    For a chosen tuple $(d_1, d_2, \dots, d_k)$, the lower-triangular matrices $T \in \ZZ^{k \times k}$ can be enumerated by the following way. We put $(d_1, d_2, \dots, d_k)$ into the diagonal of $T$. When the diagonal of $T$ is defined, for any $j \in \intint k$ and $i \in \intint[j+1]{k}$, we need to choose $T_{i j} \in \intint[0]{T_{i i}-1}$. Note that, for any $j \in \intint k$, it holds $\prod_{i = j+1}^k T_{i j} \leq \Delta / 2^j$. Consequently, for a given diagonal, the matrices $T$ can be enumerated with $\Delta^{\log_2(\Delta)/2}$ operations. Together with the enumeration of diagonals, the arithmetic cost is $\Delta^{3/2} \cdot \Delta^{\log_2(\Delta)}$.

    To finish enumeration of the matrices $H$, we need to enumerate the matrices $B \in \ZZ^{k \times s}$. Let $\BS = \bigl\{x \in \ZZ^k \colon 0 \leq x < T_{i i},\,\text{for $i \in \intint k$} \bigr\}$. Then, since the columns of $B$ are lexicographically sorted, they are in one to one correspondence with the multi-subsets of $\BS$. Since $|\BS| = \Delta$, the total number of matrices $B$ is 
    $$
    \binom{s+\Delta-1}{\Delta-1} \leq \binom{n+\Delta-1}{\Delta-1}.
    $$ The enumeration of such objects is straightforward.
    
    Consequently, the arithmetic complexity to enumerate the matrices $H = \begin{pmatrix} 
    I_{s} & \BZero_{s \times k} \\
    B & T \\
    \end{pmatrix}$ is bounded by $\binom{n+\Delta-1}{\Delta-1} \cdot \Delta^{\log_2(\Delta) + 3/2}$. Since $h \in \BS$, the vectors $h$ can be enumerated just with $O(\Delta)$ operations. Therefore, the total number of operations to enumerate subsystems $H x \leq h$ is bounded by
    \begin{equation}\label{square_system_enum}
        \binom{n+\Delta-1}{\Delta-1} \cdot \Delta^{\log_2(\Delta)+5/2}.
    \end{equation}

    Due to \cite[Lemma~6]{FPT_Grib} (see also \cite[Lemma~9]{HalfOpen}, \cite{IntroEmptyLatticeSimplicies} or \cite[Lemma~1]{OnCanonicalProblems_Grib}), the vectors $c \in \paral(-H^{\top})$ can be enumerate with $O\bigl(n \cdot \Delta \cdot \min\{n,\log_2(\Delta)\}\bigr)$ operations.

    Now, let us discuss how to enumerate the values of $c_0$. Let $v = H^{-1} h$ and denote 
    $$
    \SC(c_0) = \Bigl\{x \in \RR^n \colon \binom{H}{c^\top} x \leq \binom{h}{c_0} \Bigr\}.
    $$ Note that if $\SC(c_0)$ is full-dimensional, then it forms a simplex, and $v$ is the opposite vertex of $\SC(c_0)$ with respect to the facet induced by the inequality $c^\top x \leq c_0$. Consider the following two cases: $v \notin \ZZ^n$ and $v \in \ZZ^n$.
    
    {\bf The case $v \notin \ZZ^n$.} Consider the problem
    \begin{gather}
        c^\top x \to \min\notag\\
        \begin{cases}
        H x \leq h \\
        x \in \ZZ^n,
        \end{cases}\label{group_prob1}
    \end{gather}
    and let $f^*$ be the optimal value of the objective function. Due to \cite[Theorem~12]{OnCanonicalProblems_Grib}, the problem \eqref{group_prob1} can be solved with $O\bigl(n+\min\{n, \Delta\} \cdot \Delta \cdot \log(\Delta)\bigr)$ operations. Denote $$
    l^* = \begin{cases}
        \lceil c^\top v \rceil,\quad\text{if $c^\top v \notin \ZZ$}\\
        c^\top v +1,\quad\text{if $c^\top v \notin \ZZ$}.
    \end{cases}
    $$
    Since $v \notin \ZZ^n$, and, due to Lemma \ref{form_lm}, we have $0 < f^* - l^* \leq \Delta$. By the construction, for any value of $c_0 \in \intint[l^*]{f^*-1}$, the set $\SC(c_0)$ forms an empty simplex. Due to the definition of $f^*$, for the values $c_0 \geq f^*$, the simplex $\SC(c_0)$ is not empty. Whenever, for $c_0 \leq l^* - 1$, we have $\SC(c_0) = \emptyset$. Therefore, for any $c_0 \in \intint[l^*]{f^*-1}$, we put the corresponding simplex $\SC(c_0)$ into the family $\SScr$.

    {\bf The case $v \in \ZZ^n$.} Consider the following problem
    \begin{gather}
        c^\top x \to \min\notag\\
        \begin{cases}
        H x \leq h \\
        x \in \ZZ^n\setminus\{\BZero\},
        \end{cases}\label{group_prob2}
    \end{gather}
    and let again $f^*$ be the optimal value of the objective function. To find $f^*$ we can solve the following set of problems: for any $j \in \intint n$, the problem is  
    \begin{gather}
        c^\top x \to \min\notag\\
        \begin{cases}
        H x \leq h - e_j \\
        x \in \ZZ^n.
        \end{cases}\label{group_prob3}
    \end{gather}
    Clearly, the value of $f^*$ can be identified as the minimum of optimum objectives of the defined set of problems. As it was already mentioned, each of these $n$ problems can be solved with $O\bigl(n + \min\{n, \Delta\} \cdot \Delta \cdot \log(\Delta)\bigr)$ operations. By the construction, for $c_0 < f^*$, the set $\SC(c_0)$ does not form a lattice-simplex or just empty. For $c_0 > f^*$, even if $\SC(c_0)$ forms a lattice-simplex, it is not empty. Consequently, only the simplex $\SC(f^*)$ is interesting. Next, if $\vertex\bigl(\SC(f^*)\bigr) \subseteq \ZZ^n$, we put the simplex $\SC(c_0)$ into the family $\LS$. In the opposite case, we just skip the current value of the vector $c$, and move to the next value (if it exists).  

    Due to Lemma \ref{norm_prop_lm}, for a fixed vector $c$, there are at most $\Delta$ possibilities to chose $c_0$. Consequently, with the proposed algorithms the complexity to enumerate the values of $c$ and $c_0$ is bounded by $\poly(n,\Delta)$. Therefore, recalling the complexity bound \eqref{square_system_enum} to enumerate the systems $H x \leq h$, the total enumeration complexity is the same as in the lemma definition, which completes the proof. 
\end{proof}

The following lemma helps to check the unimodular equivalence relation of two simplices.
\begin{lemma}\label{equiv_lm}
    Let $\SC$ be an arbitrary simplex defined by a system $A x \leq b$ with a $\Delta$-modular matrix $A$. Then, there exists an algorithm, which finds the set of all simplices of the family $\SScr$ (or $\LS$, respectively) that are unimodular equivalent to $\SC$. The algorithm complexity is
    $$
    n^{\log_2(\Delta)} \cdot \Delta! \cdot \poly(\phi),
    $$ where $\phi$ is encoding length of $A x \leq b$.
\end{lemma}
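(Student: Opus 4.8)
The plan is to reduce the task to enumerating all normalized systems unimodular equivalent to $\SC$, using the second proposition of Lemma \ref{form_lm}. That proposition guarantees that every normalized system equivalent to $\SC$ is an output of the Normalization Algorithm applied to a row-permuted system $P A x \leq P b$ with the fixed base $\intint n$. Conversely, since by the construction in Lemma \ref{enum_lm} the families $\SScr$ and $\LS$ are exactly the collections of all $\Delta$-modular empty (resp. empty lattice) simplices given by normalized systems, the members of $\SScr$ (resp. $\LS$) equivalent to $\SC$ are precisely the normalized forms of $\SC$ themselves. Hence I would generate all such normalized forms and then locate them in the family by a direct comparison of normalized data; if $\SC$ is not empty, unimodular maps preserve $\SC \cap \ZZ^n$, so the returned set is empty and the algorithm still behaves correctly.

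First I would restrict the admissible permutations $P$. The row placed last becomes the apex inequality $c^\top x \leq c_0$, while the $n$ remaining rows must realise $|\det| = \Delta$, as required by the first condition of Definition \ref{norm_def}; choosing the apex among the $n+1$ facets therefore yields at most $n+1$ candidate bases. Within a chosen base the Hermite Normal Form depends on the row order, but the Normalization Algorithm canonicalizes most of this freedom: the rows feeding the identity block $I_s$ are interchangeable and are pinned down by the lexicographic sorting of the columns of $B$ together with the reduction of $h$ via Lemma \ref{rhs_lm}, so their permutations cost only polynomial sorting time. The genuine combinatorial freedom lives in the nontrivial block $T$, whose size satisfies $k \leq \log_2(\Delta)$ because $\prod T_{ii} = \Delta$ with each $T_{ii} \geq 2$. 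I would enumerate which $k$ of the base rows constitute this block, contributing a factor $\binom{n}{k} \leq n^{\log_2(\Delta)}$, and then bound the remaining orderings and internal arrangements of the block crudely by $\Delta!$, which dominates the number of admissible configurations realisable inside the fundamental domain of $H$ of cardinality $|\det(H)| = \Delta$.

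For each admissible permutation I would run the polynomial-time Normalization Algorithm of Lemma \ref{form_lm} to obtain a normalized system, and, in the lattice case, additionally verify that $\vertex(\SC) \subseteq \ZZ^n$ so that the simplex is sought in $\LS$ rather than in $\SScr$. Collecting the distinct outputs over all admissible permutations produces exactly the set of members of the chosen family that are equivalent to $\SC$. Multiplying the permutation count $n^{\log_2(\Delta)} \cdot \Delta!$ by the polynomial per-permutation cost of normalization and of the comparison against the family gives the stated bound $n^{\log_2(\Delta)} \cdot \Delta! \cdot \poly(\phi)$.

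The main obstacle I anticipate is the bookkeeping needed to show that the $(n+1)!$ raw permutations collapse to the claimed $n^{\log_2(\Delta)} \cdot \Delta!$ essentially distinct normalized forms. One must argue rigorously that permutations differing only in the order of the identity-block rows yield the same normalized system after the sorting of $B$, so that they need not be tried separately, and that the block $T$ genuinely carries all remaining degrees of freedom. Establishing this \emph{redundancy of identity-row permutations}, together with the $\Delta!$ bound on the admissible arrangements inside $T$, is the technical heart of the argument, whereas the surrounding reduction and the per-permutation work are routine invocations of the earlier lemmas.
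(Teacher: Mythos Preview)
Your overall strategy coincides with the paper's: use the second proposition of Lemma \ref{form_lm} to reduce the problem to enumerating the normalized forms of $\SC$, choose the apex row in at most $n+1$ ways, and then argue that only a controlled number of row permutations of the base need to be tried. The difficulty, exactly as you anticipate, is the bookkeeping for that last step, and here your proposed accounting has a genuine gap.

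You claim that permutations of the rows feeding the identity block $I_s$ are \emph{all} absorbed by the lexicographic sorting of the columns of $B$, so they contribute no combinatorial factor, while the $\Delta!$ comes from ``arrangements of the block $T$''. Both halves of this are wrong. First, permuting the $k$ rows of the $T$-block contributes only $k! \leq (\log_2 \Delta)!$ possibilities, which together with the choice $\binom{n}{k}$ is what yields the factor $n^{\log_2(\Delta)}$; there is nothing of order $\Delta!$ hiding in $T$, and the appeal to the cardinality of the fundamental domain does not justify such a bound. Second, and more importantly, identity-row permutations do \emph{not} all collapse under sorting. Swapping two identity rows corresponds, after restoring the HNF, to swapping the corresponding columns of $B$ \emph{and} the corresponding entries of $c$. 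If those two columns of $B$ happen to be equal, the matrix $B$ is already sorted after the swap, so the sorting step does nothing --- but the two entries of $c$ remain swapped, producing a genuinely different normalized system. Thus your algorithm, which skips all identity-row permutations, will miss valid normalized forms of $\SC$.

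The paper locates the $\Delta!$ factor precisely in these identity-row permutations. Since every column of $B$ lies in the box $\prod_i \{0,\dots,T_{ii}-1\}$ of cardinality $\Delta$, there are at most $\Delta$ \emph{distinct} columns of $B$; permutations that only shuffle identity rows corresponding to equal columns of $B$ satisfy $P_\pi H = HQ$ for a unimodular $Q$ and can be declared redundant, while the remaining freedom is a permutation of the at most $\Delta$ distinct-column classes, giving at most $\Delta!$ non-redundant identity-row permutations. This is the correct source of the $\Delta!$ in the complexity bound; fixing your decomposition accordingly (and absorbing $k!$ into $\binom{n}{k}\cdot k! \leq n^{\log_2(\Delta)}$) repairs the argument.
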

\begin{proof}
    Denote the set of resulting simplices by $\RS$. Due to the second proposition of Lemma \ref{form_lm}, we can use the following simple scheme to generate the family $\RS$:
    \begin{enumerate}
        \item Enumerate all the bases $\BC$ of $A$ such that $|\det(A_{\BC})| = \Delta$;
        

        \item For a fixed base $\BC$, use the normalization algorithm of Lemma \ref{form_lm} to the system $\binom{A_{\BC}}{A_{l}} x \leq \binom{b_{\BC}}{b_{l}}$, where $l$ is the row-index of the line of $A x \leq b$, which is not contained in $\BC$. Let
        $$
     \binom{H}{c^\top} x \leq \binom{h}{c_0},\quad\text{where $H = \begin{pmatrix} 
     I_{s} & \BZero_{s \times k} \\
     B & T \\
     \end{pmatrix}$.}
     $$ be the resulting normalized system;
        
        \item Enumerate all the $n \times n$ permutation matrices $P$;
        
        \item For a fixed $P$, use normalization algorithm of Lemma \ref{form_lm} to the system
        $$
        \binom{P H}{c^\top} x \leq \binom{P h}{c_0}.
        $$ The last step will produce a new normalized system that is unimodular equivalent to $\SC$, put it into $\RS$.
    \end{enumerate}
    This approach will generate the family $\RS$, but the enumeration of all permutations is expensive with respect to $n$. We will show next that it is not necessary to enumerate all the permutation matrices, and it is enough to enumerate only a relatively small part of them.

    Let us fix a base $\BC$ of $A$, which can be done in $n+1$ ways, and consider the normalized system $\binom{H}{c^\top} x \leq \binom{h}{c_0}$ that was produced by the second step of the scheme. Note that the rows of $H$ can be partitioned into two parts: each row of the first part is a row of $n \times n$ identity matrix $I_n$, each row of the second part is a row of the matrix $\bigl(B\;T\bigr)$. Let $\IC$ be the set of indices of the first part, and $\JC$ be the set of indices of the second. 
    Any permutation $\pi$ of the rows of $A_{\BC}$ can be generated as follows: 
    \begin{enumerate}
        \item Chose $|\JC|$ positions from $n$ positions where the rows with indexes $\JC$ will be located, which can be done in $\binom{n}{k}$ ways;

        \item Permute rows indexed by $\JC$, which can be done in $k!$ ways;

        \item Permute the rows with indexes in $\IC$.
    \end{enumerate}
    Let us take a closer look at permutations of the third type. We call a permutation $\pi$ \emph{redundant}, if $P_{\pi}  H = H Q$, for some unimodular matrix $Q \in \ZZ^{n \times n}$. Clearly, all the redundant permutations $\pi$ of the rows with indices in $\IC$ can be skipped, because $H$ and $P_{\pi} H$ have the same HNFs. 
    There is a natural one to one correspondence between the rows with indices in $\IC$ and columns of $B$: any row-permutation of this rows can be undone by a corresponding column-permutation of $B$. Since, for any $i \in \intint k$ and $j \in \intint s$, $B_{i j} \leq T_{i i}$ and $\prod_{i = 1}^k T_{i i} = \Delta$, the matrix $B$ has at most $\Delta$ different columns. It is easy to see now that any row-permutation $\pi$ of row indices that correspond to equal columns of $B$ are redundant. Therefore, it is sufficient to consider only the row-permutations of $\IC$ that correspond to different columns of $B$. Since $B$ consists of at most $\Delta$ different columns, there are at most $\Delta!$ of such permutations. It is relatively easy to compute these permutations: we just need to locate the classes of equal rows in $B$, locate any representative index of each class, and enumerate the permutations of these representatives by a standard way.


    Combining permutations of all three types and the complexity to construct the HNF, which is $\poly(\phi)$, the complexity of the procedure for the fixed base $\BC$ is
    $$
    \binom{n}{k} \cdot k! \cdot \Delta! \cdot \poly(\phi) \quad\lesssim\quad n^{\log_2(\Delta)} \cdot \Delta! \cdot \poly(\phi).
    $$
    Since there are at most $n+1$ ways to chose $\BC$, the total algorithm complexity is the same as in the lemma definition, which finishes the proof.
\end{proof}

\subsection{Finishing the proof of Theorems \ref{equiv_th} and \ref{enum_th}}\label{finishing_subs}

The following proposition is straightforward, but it is convenient to emphasise it for further use.
\begin{proposition}\label{equiv_prop} The simplices $\SC_1$ and $\SC_2$ are equivalent if and only if, then there exists a simplex $\SC'$, defined by a normalized system $A x \leq b$, such that $\SC_1$ and $\SC_2$ are both unimodular equivalent to $\SC'$.
\end{proposition}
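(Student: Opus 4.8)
The plan is to read this proposition as a direct consequence of two facts: that unimodular equivalence is a genuine equivalence relation on simplices, and that the Normalization Algorithm of Lemma \ref{form_lm} always produces a normalized representative of a given simplex's equivalence class. Accordingly I would split the argument into the two implications, the backward one being the easy bookkeeping and the forward one being a single application of Lemma \ref{form_lm}.

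For the backward implication, suppose such an $\SC'$ exists. I would first record that the unimodular affine maps form a group: the composition $\UC_2 \circ \UC_1$ of two unimodular affine maps $\UC_i(x) = U_i x + x_i^0$ is again unimodular affine, since its linear part $U_2 U_1$ is a product of unimodular matrices (hence unimodular) and its translation $U_2 x_1^0 + x_2^0$ is integral, and the inverse of a unimodular affine map is likewise unimodular affine. Consequently the relation is reflexive, symmetric, and transitive. Hence if $\SC_1$ and $\SC_2$ are both unimodular equivalent to $\SC'$, then by symmetry and transitivity $\SC_1$ is unimodular equivalent to $\SC_2$.

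For the forward implication, suppose $\SC_1$ and $\SC_2$ are unimodular equivalent, and let $\SC_1$ be defined by a $\Delta$-modular system $A x \leq b$. To invoke the Normalization Algorithm I first need a base $\BC$ of $A$ attaining $|\det(A_{\BC})| = \Delta(A)$. Such a base exists because $A \in \ZZ^{(n+1)\times n}$ has rank $n$, so its rank-order minors are exactly the $n+1$ maximal minors obtained by deleting a single row, and at least one of them attains the maximum $\Delta(A)$ by definition. Applying the first proposition of Lemma \ref{form_lm} with this base produces a normalized system, defining a simplex $\SC'$, that is unimodular equivalent to $\SC_1$. Since $\SC_1$ is unimodular equivalent to $\SC_2$, transitivity gives that $\SC'$ is unimodular equivalent to $\SC_2$ as well, so $\SC'$ is the desired common normalized representative.

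There is no substantial obstacle here: the statement is formal once Lemma \ref{form_lm} is in hand. The only point needing care is the existence of a maximal base, which is precisely what allows the first condition of Definition \ref{norm_def} (namely $|\det(H)| = \Delta(A)$) to be satisfied; everything else reduces to the group structure of the unimodular affine maps.
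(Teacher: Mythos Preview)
Your proposal is correct and matches the paper's intent: the paper declares the proposition ``straightforward'' and gives no proof, and your argument---transitivity/symmetry of the unimodular equivalence relation together with the existence of a normalized representative via Lemma~\ref{form_lm}---is precisely the routine verification the paper has in mind.
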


{\bf The proof of Theorem \ref{equiv_th}.}
\begin{proof}
    First of all, we construct a unimodular equivalent normalized system for $\TC$. Due to Lemma \ref{form_lm}, it can be done by a polynomial-time algorithm.

    Next, we use Lemma \ref{equiv_lm} to construct the family $\RS$ of all the normalized systems that are equivalent to $\SC$. Due to Proposition \ref{equiv_prop}, if the simplices $\SC$ and $\TC$ are unimodular equivalent, then the family $\RS$ must contain the normalized system of $\TC$. To store the set $\RS$ effectively, we can use any well-balanced search-tree data structure with logarithmic-cost search, insert and delete operations. Clearly, we can compare two normalized systems just by representing them as two vectors of length $(n+1)^2$ and using the lexicographic order on these vectors. Due to complexity bound of Lemma \ref{equiv_lm}, and since $\log_2\bigl(|\RS|\bigr) = \poly(n, \Delta)$, the full algorithm complexity is the same as in the theorem definition, which completes the proof.
\end{proof}

{\bf The proof of Theorem \ref{enum_th}.}
\begin{proof}
    First of all, using Lemma \ref{enum_lm}, we generate the families $\SScr$ and $\LS$ of $\Delta$-modular empty simplicies and empty lattice-simplices respectively. As it was shown by Lemma \ref{enum_lm}, all the unimodular equivalence classes are already represented by $\SScr$ and $\LS$, but some different normalized systems in $\SScr$ or $\LS$ can represent the same equivalence class. Let us show how to remove such duplicates from $\SScr$, the same procedure works for $\LS$. 
    
    To store the set $\SScr$ effectively, we can use any well-balanced search-tree data structure with logarithmic-cost search, insert and delete operations. We can compare two normalized systems just by representing them as two vectors of length $(n+1)^2$ and using the lexicographic order on these vectors. Due to Lemma \ref{enum_lm}, we have $|\SScr| = O\bigl(\frac{n+\Delta}{\Delta}\bigr)^{\Delta-1} \cdot n^{O(1)}$. So, all operations with the search-tree can be performed with $\poly(n,\Delta)$ operations cost. 
    
    We enumerate all the normalized systems $\SC \in \SScr$, and using algorithm of Lemma \ref{equiv_lm}, for each $\SC$, enumerate all the normalized systems in $\SScr$ that are unimodular equivalent to $\SC$. For each system that was generated by this way, we remove the corresponding entry from the search-tree that represents $\SScr$. Due to Proposition \ref{equiv_prop}, after this procedure, all the duplicates will be removed from the search-tree, and only the normalized systems representing the unique equivalence classes will remain. Clearly, the total algorithm complexity equals to the product of the algorithm complexities of Lemmas \ref{enum_lm} and \ref{equiv_lm}:
    \begin{multline*}
        O\left(\frac{n+\Delta}{\Delta}\right)^{\Delta-1} \cdot n^{\log_2(\Delta)+O(1)} \cdot \Delta! = \\
        = O\bigl(n + \Delta\bigr)^{\Delta-1} \cdot n^{\log_2(\Delta)+O(1)},
    \end{multline*}
    which completes the proof.
\end{proof}

\section*{Conclusion}
The paper considers two problems:
\begin{itemize}
    \item The problem to enumerate all empty $\Delta$-modular simplices and empty $\Delta$-modular lattice-simplices modulo the unimodular equivalence relation;

    \item And the problem to check the unimodular equivalence of two $\Delta$-modular simplices that are not necessarily empty.
\end{itemize}
It was shown, assuming that the value of $\Delta$ is fixed, that the both problems can be solved by polynomial-time algorithms.

These results can be used to construct a data base containing all unimodular equivalence classes of empty $\Delta$-modular simplices and empty $\Delta$-modular lattice-simplices, for small values of $\Delta$ and moderate values of $n$. Due to \cite{FPT_Grib}, the lattice-width of empty simplex or empty lattice-simplex can be computed by an FPT-algorithm with respect to the parameter $\Delta$. Hence, the lattice-width is also can be precomputed for each simplex of the base. 

The author hopes that such a base and results of the paper will be helpful for studying the properties of empty simplices of both types, and general empty lattice-polytopes. Construction of the base and improvement of existing algorithms to deal with simplices is an interesting direction for further research.

\section*{Acknowledgments}
The results was prepared under financial support of Russian Science Foundation grant No 21-11-00194.

\bibliographystyle{splncs04}
\bibliography{grib_biblio}

\end{document}